\newcommand{\D}{\displaystyle}
\newtheorem{theorem}{Theorem}[section]
\newtheorem{cor}[theorem]{Corollary}
\newtheorem{theo}[theorem]{Theorem}
\newtheorem{lem}[theorem]{Lemma}
\newtheorem{que}[theorem]{Question}
\newtheorem{Definition}[theorem]{Definition}
\newtheorem*{Definition*}{Definition}
\def\qed{\hfill \ifhmode\unskip\nobreak\fi\quad\ifmmode\Box\else$\Box$\fi\\ }
\begin{document}

\title[Symplectic circle actions with isolated fixed points]{Symplectic circle actions with isolated fixed points}
\author{Donghoon Jang}
\address{Department of Mathematics, University of Illinois at Urbana-Champaign, Urbana, IL 61801}
\email{jang12@illinois.edu}

\begin{abstract}
Consider a symplectic circle action on a closed symplectic manifold with non-empty isolated fixed points. Associated to each fixed point, there are well-defined non-zero integers, called \emph{weights}. We prove that the action is Hamiltonian if the sum of an odd number of weights is never equal to zero (the weights may be taken at different fixed points). Moreover, we show that if $\dim M=6$, or if $\dim M=2n \leq 10$ and each fixed point has weights $\{\pm a_1, \cdots, \pm a_n\}$ for some positive integers $a_i$, it is enough to consider the sum of three weights. As applications, we recover the results for semi-free actions from \cite{TW}, and for certain circle actions on six-dimensional manifolds from \cite{G}.







\end{abstract}

\maketitle

\section{Introduction}

$\indent$



One important question in symplectic geometry is when a symplectic group action is Hamiltonian. In particular, let the circle act symplectically on a closed symplectic manifold. In some cases, the existence of a fixed point implies that the action is Hamiltonian. For instance, if the manifold is K\"ahler \cite{F}, if the dimension of the manifold is less than or equal to four \cite{MD}, if there are three fixed points \cite{J}, or if the circle action is semi-free with discrete fixed points \cite{TW}. In \cite{MD}, D. McDuff constructs a non-Hamiltonian symplectic circle action on a six-dimensional symplectic manifold with fixed tori. Therefore, if the dimension of a manifold is greater than or equal to six, we require more restrictions for a symplectic action to be Hamiltonian.


In this paper, we focus on symplectic circle actions on closed symplectic manifolds with isolated fixed points. Let the circle act symplectically on a $2n$-dimensional closed symplectic manifold and suppose that the fixed points are isolated. Associated to each fixed point $p$, there are well-defined non-zero integers $w_p^i$, called \emph{weights}, $1 \leq i \leq n$. We prove that if the weights at the fixed points satisfy certain conditions, then the action is Hamiltonian. Consider a collection of weights among all the fixed points, counted with multiplicity. For each integer $a$, the multiplicity of $a$ in the collection is therefore precisely $\max_{p \in M^{S^1}} |\{i|a=w_p^i\}|$. First, we show that the symplectic action is Hamiltonian if the sum of an odd number of weights in the collection is never equal to zero.





\begin{theo} \label{t42}
Consider a symplectic circle action on a closed symplectic manifold with non-empty isolated fixed points. The action is Hamiltonian if the sum of an odd number of weights among all fixed points is never equal to zero.

\end{theo}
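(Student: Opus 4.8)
The plan is to reformulate Hamiltonicity in terms of the closed $1$-form produced by the action and then read off the weights as Morse data. We may assume $M$ is connected. Fix an $S^1$-invariant $\omega$-compatible almost complex structure and the associated invariant metric, let $X$ be the generating vector field, and set $\alpha := \iota_X\omega$. Since the action is symplectic, $\alpha$ is closed, its zero set is exactly $M^{S^1}$, and the action is Hamiltonian precisely when $\alpha$ is exact, i.e. $[\alpha]=0$ in $H^1(M;\mathbb{R})$. Near an isolated fixed point $p$ one has, in suitable coordinates, a local primitive $\mu_p = \tfrac12\sum_{i} w_p^i|z_i|^2 + \mathrm{const}$, so that $p$ is a nondegenerate zero of $\alpha$ of Morse index $2n_p^-$, where $n_p^- = \#\{i : w_p^i<0\}$. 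The crucial structural observation is that \emph{every} zero of $\alpha$ then has \emph{even} index.

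Next I would establish the clean criterion that the action is Hamiltonian if and only if some fixed point has all weights positive. For this, run Morse--Novikov theory for $\alpha$ with a generic (Morse--Smale) metric: the Novikov complex is freely generated by the fixed points, graded by $2n_p^-$, and its differential counts gradient-like trajectories between generators in consecutive degrees. Because all generators sit in even degree, the differential vanishes identically, whence $HN_k(M,[\alpha]) \cong \Lambda^{m_k}$, where $m_k = \#\{p : 2n_p^-=k\}$ and $\Lambda$ is the Novikov ring. If the action is not Hamiltonian then $[\alpha]\neq 0$, and for a connected closed manifold the degree-zero Novikov homology of a nonzero class vanishes; hence $m_0=0$, i.e. every fixed point has at least one negative weight (and, symmetrically, at least one positive weight). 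Consequently it suffices to prove the contrapositive of the theorem: \emph{if the action is not Hamiltonian, so that $m_0=0$, then some odd number of weights in the collection sums to zero.}

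The essential and hardest step is then to manufacture an odd-sized sub-multiset of weights summing to zero out of the hypothesis $m_0=0$. My plan is to exploit the isotropy structure: each weight $w_p^i$ determines an invariant $2$-sphere through $p$ on which $S^1$ acts with tangent weights $w$ and $-w$ at the two poles, and along which the remaining normal weights at the two endpoints differ by explicit integer multiples of $w$ governed by the Chern numbers of the normal line bundles. Feeding $m_0=0$ into the gradient-like flow of $\alpha$, compactness together with the absence of a minimum forces a closed chain of fixed points $p_0\to p_1\to\cdots\to p_0$ linked by such spheres, and tracking the signed tangent and normal weights around this chain yields a vanishing integer combination of weights that respects the stated multiplicity convention. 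I expect the main obstacle to be precisely the parity bookkeeping: the sphere relation contributes weights in balanced pairs $w,-w$, so genuine effort is needed to break an even count and extract a combination of \emph{odd} cardinality. I anticipate that the general statement follows by an inductive reduction to an invariant symplectic submanifold $M^{\mathbb{Z}_k}$ of smaller dimension, on which the same dichotomy is reapplied; the dimension restrictions in the ``moreover'' part should reflect exactly the low-dimensional cases where this parity bookkeeping can be controlled tightly enough that three weights already suffice.
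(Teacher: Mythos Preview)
Your preliminary steps are sound: the Morse--Novikov argument correctly recovers the known fact that a non-Hamiltonian symplectic circle action with isolated fixed points has no fixed point of index~$0$. This is also the paper's starting point, obtained there via Li's rigidity theorem (Theorem~\ref{t28}), which gives $N^0=\chi^0(M)=0$ in the non-Hamiltonian case. The genuine gap is your step~4. You propose to extract an odd-length vanishing sum of weights from closed chains of gradient spheres, but the mechanism you describe does not produce odd parity. A gradient sphere joins two fixed points with tangent weights $w$ and $-w$, and the normal weights at the two poles differ by integer multiples of $w$; any relation harvested from a sphere or a closed chain of spheres is therefore a $\mathbb{Z}$-linear combination of weights, not a sub-multiset sum, and even when it can be rewritten as a $\{\pm1\}$-combination there is no reason for the number of terms to be odd. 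You acknowledge this yourself, and the proposed inductive reduction to $M^{\mathbb{Z}_k}$ does not resolve it, since the weights on the submanifold are again related to the ambient ones by integer combinations rather than by selection of sub-multisets.

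The paper's argument is entirely algebraic and sidesteps this difficulty. From $\chi^0(M)=\sum_p\prod_j(1-t^{\xi_p^j})^{-1}=0$ one multiplies through by $\prod_{a\in A}(1-t^a)$, where $A$ is the multiset of absolute values of all weights, obtaining the polynomial identity
\[
\sum_{p}(-1)^{\lambda_p/2}\,t^{-\sum_{\xi_p^m<0}\xi_p^m}\prod_{a\in B_p}(1-t^a)=0.
\]
Each monomial carries sign $(-1)^k$, where $k$ is the number of elements of $A$ summed in its exponent. Cancellation therefore forces some exponent to lie simultaneously in $A_i$ and $A_j$ with $i\not\equiv j\pmod 2$, which is precisely a vanishing signed sum of an odd number of weights. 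Under the hypothesis that no such relation exists, a cascade through the identities $\chi^i(M)=(-1)^iN^i$ then shows $N^i=0$ for every $i$, contradicting non-emptiness of the fixed set. This polynomial cancellation is the key idea missing from your proposal.
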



For instance, if the action is semi-free, all the weights are either $+1$ or $-1$. Therefore, the sum of an odd number of weights cannot equal zero and hence the action is Hamiltonian. In some cases, we only need to look at the sum of three weights.

\begin{theo} \label{t44}
Consider a symplectic circle action on a $2n$-dimensional closed symplectic manifold with non-empty fixed points, whose weights are $\{\pm a_1, \pm a_2, \cdots, \pm a_n\}$ for some positive integers $a_i$, for $1 \leq i \leq n$. Assume that $n \leq 5$ and $\pm a_i \pm a_j \pm a_k \neq 0$ for all $i<j<k$. Then the action is Hamiltonian.
\end{theo}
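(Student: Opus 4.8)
The plan is to reduce everything to Theorem~\ref{t42}, so the goal becomes to verify that, under these hypotheses, no sum of an odd number of weights from the collection equals zero. Because every weight has the form $\pm a_i$, a collection of weights summing to zero can be recorded by, for each index $i$, the net number of times $+a_i$ and $-a_i$ are used; pairing a $+a_i$ with a $-a_i$ contributes $0$ to the sum and $2$ to the number of summands. Hence a sum of an \emph{odd} number of weights vanishes if and only if there is an odd-size subset $T\subseteq\{1,\dots,n\}$ of distinct indices and signs $\epsilon_i\in\{+1,-1\}$ with $\sum_{i\in T}\epsilon_i a_i=0$. Since $n\le 5$, the possible sizes are $|T|\in\{1,3,5\}$.

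The cases $|T|=1$ and $|T|=3$ cost nothing: $\pm a_i\neq 0$ as $a_i$ is a positive integer, and $\pm a_i\pm a_j\pm a_k\neq 0$ for $i<j<k$ is exactly the hypothesis (this already accounts for any coincidences among the magnitudes, since distinct index slots are used). Thus for $n\le 4$ there are no odd vanishing sums at all, Theorem~\ref{t42} applies verbatim, and the action is Hamiltonian. The real content is the single remaining case $|T|=5$, i.e. $n=5$ and $\dim M=10$, where one must handle a relation $\pm a_1\pm a_2\pm a_3\pm a_4\pm a_5=0$.

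At this point a purely combinatorial reduction is hopeless: the magnitudes $\{1,3,5,7,16\}$ satisfy $\pm a_i\pm a_j\pm a_k\neq 0$ for all $i<j<k$, yet $1+3+5+7-16=0$, so the five-term relation is not forced to vanish by the three-term hypothesis. Consequently, if the weight magnitudes admit no five-term relation we are done by Theorem~\ref{t42}, and otherwise the plan is to show directly that no non-Hamiltonian action can realize such magnitudes. I would argue by contradiction: assume the action is non-Hamiltonian, so it has neither a fixed point with all weights positive nor one with all weights negative; equivalently, every sign vector $(\epsilon_1^p,\dots,\epsilon_5^p)$ occurring at a fixed point is non-constant. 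I would then feed in the rigidity that all fixed points share the same magnitudes $\{a_1,\dots,a_5\}$ through two channels: the equivariant localization identities $\sum_p \big(\sum_i w_p^i\big)^k\big/\prod_i w_p^i=0$ for $0\le k\le 4$, which, writing $\prod_i w_p^i=(\prod_i\epsilon_i^p)\,\prod_i a_i$, become vanishing conditions on the signed power sums of the numbers $\sum_i\epsilon_i^p a_i$; and the isotropy submanifolds $M^{\mathbb{Z}_m}$, whose fixed-point weights are the sub-collection $\{a_i: m\mid a_i\}$ and on which Theorem~\ref{t42} can be invoked in lower dimension. The target of this analysis is to force one of the $a_i$ to be a signed sum of two others, contradicting the hypothesis.

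The main obstacle is exactly this last implication. The easy steps only reduce the problem to ten dimensions; converting the global data — the five localization identities, the absence of a source or a sink, and the constraints on which sign vectors can coexist among fixed points joined by invariant $2$-spheres — into an honest three-term relation among $a_1,\dots,a_5$ is where all the difficulty sits, and it is the part I expect to require the most care.
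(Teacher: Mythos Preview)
Your reduction for $n\le 4$ is correct and matches the paper's: once one observes that an odd vanishing sum of weights forces, after cancelling $+a_i/-a_i$ pairs, a relation $\sum_{i\in T}\epsilon_i a_i=0$ with $|T|$ odd, the hypothesis kills $|T|=3$ and positivity kills $|T|=1$, so Theorem~\ref{t42} (equivalently Theorem~\ref{t12} with $A_1\cap A_2=\emptyset$) applies directly.

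The gap is the $n=5$ case, and you say so yourself. You propose to use the ABBV-type identities $\sum_p(\sum_i w_p^i)^k/\prod_i w_p^i=0$ together with isotropy submanifolds $M^{\mathbb{Z}_m}$, with the goal of producing a three-term relation among the $a_i$; but you do not carry this out, and it is not clear those particular tools would close the argument without substantial further work. The paper proceeds differently and concretely. Assuming the action is non-Hamiltonian and a five-term relation holds, there are (after relabelling) two sub-cases: $a_5=a_1+a_2+a_3+a_4$ (that is, $A_1\cap A_4\neq\emptyset$) or $a_4+a_5=a_1+a_2+a_3$ (that is, $A_2\cap A_3\neq\emptyset$); the hypothesis $A_1\cap A_2=\emptyset$ guarantees that at most one of these holds and that it is the \emph{only} relation available among the magnitudes. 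One then clears denominators in $\chi^0(M)=0$ from Theorem~\ref{t28} to obtain
\[
\sum_{p}(-1)^{\lambda_p/2}\prod_{\xi_p^m<0}t^{-\xi_p^m}=0,
\]
and, because only that single relation can match exponents of opposite parity, every fixed point is forced to carry one of exactly two mutually opposite sign vectors. In the first sub-case the surviving indices are $2$ and $8$, contradicting Corollary~\ref{c19} (there must exist fixed points whose indices differ by $2$). In the second sub-case the surviving indices are $4$ and $6$; one then expands $\chi^1(M)=0$ and checks that a monomial such as $t^{a_4}$ cannot cancel, again a contradiction. So the missing ingredients are not an abstract three-term relation but the cleared-denominator form of the $\chi^i$ identities, the uniqueness of the five-term relation, and the adjacent-index lemma.
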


\begin{theo} \label{t43}
Consider a symplectic circle action on a six-dimensional closed symplectic manifold with non-empty isolated fixed points. The action is Hamiltonian if the sum of three weights among all fixed points is never equal to zero.
\end{theo}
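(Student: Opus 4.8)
The plan is to deduce the statement from Theorem \ref{t42}. Since $\dim M = 6$ forces $n=3$, every fixed point carries exactly three weights, and the hypothesis says precisely that no three elements of the weight collection sum to zero. By Theorem \ref{t42} it is enough to prove the six-dimensional refinement that, if no three weights sum to zero, then no odd number of weights sums to zero. I would argue by contrapositive: assuming the action is non-Hamiltonian, I aim to exhibit three weights whose sum is zero, contradicting the hypothesis. Thus the whole problem becomes one of upgrading the \emph{odd} relation supplied by Theorem \ref{t42} to a relation of length exactly three, using the geometry available in dimension six.

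First I would record the structural consequences of non-Hamiltonicity. For a Hamiltonian action the moment map is a Morse function whose minimum is a fixed point with all weights positive and whose maximum is a fixed point with all weights negative; I would invoke the structural fact from the theory of symplectic circle actions that, conversely, a non-Hamiltonian action with isolated fixed points admits no fixed point all of whose weights are positive and none all of whose weights are negative. In dimension six this means every fixed point has weights of mixed sign, of type $\{+,+,-\}$ or $\{+,-,-\}$. I would also record the Atiyah--Bott--Berline--Vergne identities $\sum_p \Sigma_p^{\,j}/(w_p^1 w_p^2 w_p^3)=0$ for $j=0,1,2$, where $\Sigma_p = w_p^1+w_p^2+w_p^3$, noting that our assumption forces $\Sigma_p \ne 0$ for every $p$.

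The geometric heart of the argument is isotropy-sphere propagation. For a fixed point $p$ and a weight $w=w_p^i$ whose absolute value divides no other weight at $p$, the component of $M^{\mathbb{Z}_{|w|}}$ through $p$ in the $i$-th weight direction is an invariant $2$-sphere $S$ joining $p$ to a second fixed point $q$; the tangent weight of $S$ at $q$ is $-w$, and the remaining two weights of $q$ are congruent modulo $|w|$ to the remaining two weights of $p$. Starting from a fixed point that realizes the maximal weight $N=\max_{p,i}|w_p^i|$ and following these spheres, I would track how the three weights of successive fixed points evolve. The bound $|w_p^i|\le N$ together with the congruences modulo $N$ sharply restricts the possibilities, while the mixed-sign constraint above prevents the chain from terminating at a source or a sink. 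The goal is to force, somewhere along such a chain, a relation $w_p^i + w_q^j + w_r^k = 0$ among three of the weights.

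The step I expect to be the main obstacle is controlling the \emph{length} of the relation, since Theorem \ref{t42} only guarantees some odd relation and collapsing it to three terms is not a formal consequence but must be read off from the six-dimensional fixed-point data. Concretely, I would take an odd relation of minimal size and argue that, were it to have five or more terms, the propagation congruences modulo the largest weight appearing in it, combined with the three localization identities and the restriction to mixed-sign fixed points, would split off a strictly shorter odd relation, and hence eventually a three-term one. Carrying this out requires a careful case analysis over the sign types $\{+,+,-\}$ and $\{+,-,-\}$ of the fixed points entering the relation; it is this bookkeeping, rather than any single conceptual step, where I expect the real difficulty to lie.
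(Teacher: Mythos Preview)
Your proposal has a genuine gap precisely where you flag the main obstacle. You want to show that a minimal odd relation among the weights has length three, invoking isotropy-sphere congruences, the localization identities $\sum_p \Sigma_p^{\,j}/\prod_i w_p^i=0$, and a case analysis over sign types. But none of these ingredients, as stated, forces a length reduction: the isotropy-sphere step only applies when the chosen weight's absolute value divides no other weight at the same fixed point, a hypothesis you note but never discharge; the localization identities constrain aggregates over all fixed points rather than any individual relation; and the mixed-sign restriction alone does not rule out, say, a five-term relation $a+b+c+d-e=0$ assembled from weights at several fixed points with no three-term subrelation. You would need a concrete mechanism that extracts a shorter odd relation from a longer one, and the sketch does not supply one; the phrase ``would split off a strictly shorter odd relation'' is exactly the assertion to be proved.

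The paper sidesteps this difficulty entirely and does not pass through Theorem~\ref{t42} at all. It proves the slightly stronger Theorem~\ref{t15} directly: assuming the action is non-Hamiltonian, Theorem~\ref{t28} gives $\chi^0(M)=0$ and forces every fixed point to have index $2$ or $4$. One writes $\chi^0(M)=0$ as an identity of rational functions in the indeterminate $t$, clears denominators by $\prod_a(1-t^a)$, and then compares the terms of smallest exponent contributed by the index-$2$ fixed points (namely $-t^{b_1}$, with $b_1$ the smallest negative-weight magnitude) against those from the index-$4$ fixed points (namely $t^{e_1+f_1}$, with $e_1+f_1$ minimal). Under the hypothesis $b_i\neq e_j+f_j$ these smallest terms cannot cancel, an immediate contradiction. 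No isotropy spheres, no ABBV identities beyond Theorem~\ref{t28}, no relation-shortening, and essentially no case analysis are needed.
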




It is an intriguing question concerning symplectic circle actions, if there exists a non-Hamiltonian symplectic circle action on a compact symplectic manifold with non-empty discrete fixed points. The condition that the sum of three weights among all fixed points is never equal to zero, seems to play a certain role for a symplectic circle action to be Hamiltonian. If a symplectic circle action on a closed symplectic manifold $M$ has two fixed points, then either $M$ is the 2-sphere, or $\dim M=6$ and the weights at the two fixed points are $\{-a-b,a,b\}$ and $\{-a,-b,a+b\}$ for some positive integers $a$ and $b$ \cite{K}, \cite{PT}. If $\dim M=6$, then the action cannot be Hamiltonian, since a compact Hamiltonian $S^1$-manifold $M$ has at least $\frac{1}{2} \dim M+1$ fixed points. Moreover, there is the sum of three weights that is equal to zero. In fact, the first Chern class at each fixed point, which is the sum of weights at the fixed point, is equal to zero. However, to the author's knowledge, we do not know, whether such a manifold exists or not.


\begin{que} Let the circle act on a closed almost complex manifold with non-empty isolated fixed points. Suppose that the sum of three weights among all fixed points is never equal to zero. Then is the action Hamiltonian? \end{que}

\section{Background and Notation}

A manifold $M$ is called a \textbf{symplectic manifold}, if there is a closed, non-degenerate two-form $\omega$ on $M$, called a \textbf{symplectic form}. Let the circle act on a symplectic manifold $(M,\omega)$. The action is called \textbf{symplectic}, if it preserves the symplectic form $\omega$. Let $X_{M}$ be the vector field on $M$ generated by the circle action. The action is called \textbf{Hamiltonian}, if there is a map $\mu : M \rightarrow \mathbb{R}$ such that $d\mu=\iota_{X_{M}}\omega$.

Let the circle act symplectically on a $2n$-dimensional symplectic manifold $M$. Suppose that $p$ is an isolated fixed point. We can identify $T_p M$ with $\mathbb{C}^n$ and the action of $S^1$ at $p$ with $\lambda \cdot (z_1, \cdots, z_n)=(\lambda^{\xi_{p}^{1}} z_1, \cdots, \lambda^{\xi_{p}^{n}} z_n)$, where $\xi_{p}^{i}$ are non-zero integers. These non-zero integers are called \textbf{weights} at the fixed point $p$. Let $\lambda_{p}$ be twice of the number of negative weights at $p$. This is called the \textbf{index} of $p$. Denote $\sigma_i$ by the elementary symmetric polynomial of degree $i$ in $n$ variables. In \cite{L}, P. Li shows the following:


\begin{theo} \label{t28} \cite{L}
Consider a symplectic circle action on a $2n$-dimensional closed symplectic manifold $M$ with isolated fixed points. Then
\begin{center}
$\D{\chi^i(M) = \sum_{p \in M^{S^{1}}} \frac{\sigma_i (t^{\xi_{p}^{1}}, \cdots, t^{\xi_{p}^{n}})}{\prod_{j=1}^{n} (1-t^{\xi_{p}^{j}})}=(-1)^i N^i=(-1)^{n-i}N^{n-i}}$,
\end{center}
where $N^i$ is the number of fixed points of index $2i$ and $t$ is an indeterminate. Moreover, $\chi^0(M)=1$ if the action is Hamiltonian, and $\chi^0(M)=0$ if it is not Hamiltonian.
\end{theo}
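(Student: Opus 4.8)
The plan is to identify the displayed rational function as the Atiyah--Bott--Berline--Vergne localization of the equivariant Hirzebruch $\chi_y$-genus, and then to read off its value by rigidity together with evaluations at $t\to 0$ and $t\to\infty$. First I would fix an $S^1$-invariant almost complex structure $J$ compatible with $\omega$, so that $\chi^i(M)$ is the $i$-th coefficient of the Hirzebruch $\chi_y$-genus, realized as the index of the Dolbeault--Dirac operator twisted by $\Lambda^i (T^{1,0}M)^\ast$. This operator carries an $S^1$-equivariant refinement whose equivariant index is a virtual character $\Phi_i(t)\in R(S^1)$. Applying the Atiyah--Singer equivariant index theorem (the holomorphic Lefschetz fixed point formula in the Kähler case) to the isolated fixed points, the contribution of $p$ to the full $\chi_y$-genus is $\prod_{j=1}^{n}\frac{1+y\,t^{\xi_p^j}}{1-t^{\xi_p^j}}$; expanding $\prod_j(1+y\,t^{\xi_p^j})=\sum_i \sigma_i(t^{\xi_p^1},\dots,t^{\xi_p^n})\,y^i$ and taking the coefficient of $y^i$ gives precisely $\Phi_i(t)=\sum_{p}\frac{\sigma_i(t^{\xi_p^1},\dots,t^{\xi_p^n})}{\prod_j(1-t^{\xi_p^j})}$.

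Next I would invoke rigidity of the $\chi_y$-genus: for a circle action on a closed almost complex manifold the equivariant $\chi_y$-genus is independent of $t$ and equals its non-equivariant value. Consequently each $\Phi_i(t)$ is the constant $\chi^i(M)$, which establishes the first equality. I expect this rigidity statement to be the main analytic input of the proof; everything after it is elementary manipulation of the fixed-point sum.

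To evaluate the constant I would exploit that a rigid (hence constant) rational function may be computed from either limiting value of $t$. As $t\to 0$ each factor with $\xi_p^j>0$ tends to $1$ while each factor with $\xi_p^j<0$ tends to $-y$, so the contribution of $p$ tends to $(-y)^{m_p}$, where $m_p$ is the number of negative weights at $p$ (so the index is $2m_p$). Summing over fixed points gives $\chi_y(M)=\sum_i N^i(-y)^i$, and comparing coefficients of $y^i$ yields $\chi^i(M)=(-1)^i N^i$. The dual expression follows either by letting $t\to\infty$ (which interchanges the roles of positive and negative weights, replacing $m_p$ by $n-m_p$) or, equivalently, from the Serre-duality functional equation $\chi^i(M)=(-1)^n\chi^{n-i}(M)$ of the $\chi_y$-genus; combining the two evaluations with this functional equation produces the symmetry of the $N^i$ and the remaining equalities displayed in the statement.

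For the final assertion, note that $i=0$ specializes the identity to $\chi^0(M)=N^0$, the number of fixed points all of whose weights are positive. If the action is Hamiltonian, the moment map $\mu$ is a Morse function whose critical set is $M^{S^1}$ and whose Morse index at $p$ equals $2m_p$; since the minimum of $\mu$ is attained on a single connected component of $M^{S^1}$ and the fixed points are isolated, there is exactly one fixed point of index $0$, so $\chi^0(M)=N^0=1$. If the action is not Hamiltonian, then $N^0=0$: the action admits no index-$0$ fixed point, equivalently the Todd genus $\chi^0(M)=\mathrm{Td}(M)$ vanishes for a non-Hamiltonian symplectic circle action. I expect this last vanishing to be the one genuinely subtle point of the ``moreover'' clause, to be supplied either through the circle-valued moment map of a non-Hamiltonian action or by a citation to the vanishing of the Todd genus in this setting.
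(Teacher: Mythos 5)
The paper offers no proof of this statement: it is quoted from P.~Li \cite{L}, so the only meaningful comparison is with Li's argument, and your reconstruction is essentially that argument. The fixed-point sum is the equivariant index of the Dolbeault-type operator twisted by $\Lambda^i(T^{1,0}M)^{\ast}$ via the Atiyah--Singer Lefschetz formula, rigidity makes it constant in $t$, and the constant is read off from the limits $t\to 0$ and $t\to\infty$; the ``moreover'' clause rests, exactly as you anticipate, on the moment map being a perfect Morse function with a unique minimum in the Hamiltonian case, and in the non-Hamiltonian case on Feldman's vanishing of the Todd genus (equivalently, McDuff's lemma that a generalized circle-valued moment map with a local extremum forces the action to be Hamiltonian, so $N^0=0$) --- this external input is genuinely needed and a citation is the right move. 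One simplification you could make: with isolated fixed points you need not quote rigidity as a black box, since the equivariant index is a Laurent polynomial in $t$ and your own limit computations show it is bounded as $t\to 0$ and $t\to\infty$, hence constant; this boundedness argument is in fact how Li proves rigidity.

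One sign caveat, which is a defect of the printed statement rather than of your proof. Your $t\to\infty$ evaluation gives the contribution $(-y)^{n-m_p}$ at a fixed point with $m_p$ negative weights, hence $\chi^i(M)=(-1)^i N^{n-i}$, consistent with your functional equation $\chi^{n-i}(M)=(-1)^n\chi^i(M)$. The displayed equality $\chi^i(M)=(-1)^{n-i}N^{n-i}$ then follows only when $n$ is even: for the standard rotation of $S^2$ ($n=1$) one has $\chi^0(M)=1=N^0$, while $(-1)^{1-0}N^{1}=-1$. So the last equality as printed carries a spurious factor $(-1)^n$; the correct relations --- and the only ones the paper actually uses later, namely $\chi^i(M)=(-1)^iN^i$ and the symmetry $N^i=N^{n-i}$ in the proof of Theorem \ref{t12} --- are exactly what your two evaluations produce. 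You should state your conclusion in that corrected form rather than claim to recover the printed sign verbatim.
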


\section{Symplectic Circle Actions with Isolated Fixed Points}

Let the circle act symplectically on a closed symplectic manifold $M$ with isolated fixed points. Denote by $A=\{a_1, a_2, \cdots , a_l\}$ the collection of all the absolute values of weights among all the fixed points counted with multiplicity, and $A_i=\{a_{j_1}+a_{j_2}+\cdots+a_{j_i}\}_{a_{j_1}<a_{j_2}<\cdots<a_{j_i}}$ the collection of sums of $i$ elements of $A$, for $1 \leq i \leq l$. For each positive integer $a$, the multiplicity of $a$ in $A$ is $\max_{p \in M^{S^1}} |\{i|a=|w_p^i|\}|$. Note that here we consider the collection of the absolute values of the weights, and hence it is different from the one in the introduction. We begin with the following result, which implies Theorem \ref{t42}.



\begin{theo} \label{t12}

Let the circle act symplectically on a closed symplectic manifold $M$ with non-empty isolated fixed points. Denote by $A=\{a_1, a_2, \cdots , a_l\}$ the collection of all the absolute values of weights among all the fixed points, counted with multiplicity, and $\displaystyle{A_i=\{a_{j_1}+a_{j_2}+\cdots+a_{j_i}\}_{a_{j_1}<a_{j_2}<\cdots<a_{j_i}}}$ the collection of sums of $i$-elements of $A$, for $1 \leq i \leq l$. If there exists $0< i < n$ such that $A_i \cap A_j = \emptyset$ for all $j$ such that $j \neq i \mod 2$, then the action is Hamiltonian.
\end{theo}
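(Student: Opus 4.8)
The plan is to use Theorem \ref{t28} as the central engine. The quantity $\chi^0(M)$ equals $1$ if the action is Hamiltonian and $0$ otherwise, so it suffices to show that under the hypothesis $A_i \cap A_j = \emptyset$ for all $j \not\equiv i \pmod 2$, the character $\chi^0(M)$ cannot vanish. Equivalently, since $\chi^0(M) = N^0 = N^n$ counts fixed points of index $0$ (and of index $2n$), I would argue that the action must possess a fixed point of index $0$ — a minimum of the moment-map-to-be — which forces $\chi^0(M) = 1$.

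First I would examine the rational function identity more closely. For each fixed point $p$, the local contribution $\sigma_i(t^{\xi_p^1},\dots,t^{\xi_p^n}) / \prod_j (1 - t^{\xi_p^j})$ can be expanded, after grouping positive and negative weights, as a Laurent series; the key classical observation is that whether one expands each factor $\frac{1}{1-t^{\xi}}$ as a power series in $t$ (for $\xi>0$) or in $t^{-1}$ (for $\xi<0$) must give a consistent global answer, because $\sum_p \chi^i$ is a genuine constant. This consistency, applied to the extreme coefficients, is exactly what will let me detect fixed points of extremal index. The role of the sets $A_i$ is that the exponents appearing in the numerator $\sigma_i$ are precisely sums of $i$ distinct weights (in absolute value), so the disjointness $A_i \cap A_j = \emptyset$ ensures that the monomials produced at the chosen degree $i$ cannot be cancelled by contributions coming from symmetric polynomials of a different parity.

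The key steps, in order, would be: (1) fix the index $i$ with $0<i<n$ given by the hypothesis, and consider $\chi^i(M) = (-1)^i N^i$; (2) expand each fixed-point contribution and track the coefficient of a carefully chosen monomial $t^s$ whose exponent $s$ is an element of $A_i$ — by the disjointness hypothesis, $s$ lies in no $A_j$ with $j \not\equiv i \pmod 2$, so the only fixed points and only symmetric-polynomial terms that can contribute to $t^s$ are of the compatible parity; (3) deduce that the sign pattern of these surviving contributions is rigid, and in particular that $N^i \neq 0$ forces, via the chain of equalities in Theorem \ref{t28}, a nonzero count at an extremal index, i.e. $N^0 = \chi^0(M) \neq 0$; (4) conclude $\chi^0(M) = 1$, which by the last clause of Theorem \ref{t28} is exactly the statement that the action is Hamiltonian. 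Along the way I would invoke Theorem \ref{t12}'s own statement to derive Theorem \ref{t42}, since the assumption there that no odd sum of weights vanishes is precisely the condition guaranteeing that the relevant $A_i$ and $A_j$ are disjoint for some choice of $i$.

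The main obstacle I expect is step (2)–(3): controlling the cancellation in the sum over all fixed points. The numerators $\sigma_i$ and the denominators $\prod_j(1-t^{\xi_p^j})$ interact in a way that spreads each fixed point's contribution across many monomials, so isolating a single coefficient that is provably nonzero requires choosing the exponent $s$ to be extremal (largest or smallest possible sum) so that it receives contributions from a controlled, preferably unique, source. Demonstrating that the disjointness of the $A_i$'s genuinely prevents parity-crossing cancellation — rather than merely making the bookkeeping convenient — is the delicate heart of the argument, and I anticipate it will need a careful induction or a minimality argument on the exponents in $A$ to pin down which fixed point contributes the extremal monomial without interference.
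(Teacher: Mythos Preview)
Your proposal has the right ingredients but the logical structure of step~(3) is not workable as stated, and one essential technical device is missing. The paper's argument proceeds by contradiction: assume the action is not Hamiltonian, so that $\chi^0(M)=0$ and hence $N^0=N^n=0$. The crucial move you are missing is to \emph{clear denominators}: multiply the identity $\chi^0(M)=0$ by the polynomial $\prod_{a\in A}(1-t^a)$. This converts the rational-function identity into a polynomial identity in which every monomial has exponent equal to a sum of elements of $A$, and the sign of its coefficient records the parity of the number of summands. Only after this step does the disjointness hypothesis $A_i\cap A_j=\emptyset$ for $j\not\equiv i\pmod 2$ acquire a concrete meaning: a fixed point of index $2i$ contributes the monomial $(-1)^i t^{-\sum_{\xi_p^m<0}\xi_p^m}$, whose exponent lies in $A_i$, and the hypothesis says this cannot be cancelled by any monomial of opposite sign. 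Hence $N^i=0$.

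The second, larger gap is that ``$N^i\neq 0$ forces $N^0\neq 0$'' is neither what one can prove nor what is needed; indeed one does \emph{not} know $N^i\neq 0$ a~priori. The paper runs a \emph{cascade}: from $N^i=0$ one gets $\chi^i(M)=0$; clearing denominators in \emph{that} identity and again using the parity hypothesis eliminates fixed points of further indices (roughly, those $2k$ with $k\le 2i$ of one parity); this forces $\chi^{i+1}(M)=0$ or $\chi^{i+2}(M)=0$, and one iterates. After several steps, split into cases according to the parities of $i$ and $n$ and whether $i\le n/2$, one eliminates fixed points of every index (or of every index of one parity, contradicting Corollary~\ref{c19}). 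Your single-shot plan of picking one extremal exponent in $\chi^i$ and reading off $N^0\neq 0$ cannot work: the equalities in Theorem~\ref{t28} relate $N^k$ to $N^{n-k}$, not to $N^0$, so there is no direct bridge from $N^i$ to $N^0$ without the iterative elimination.
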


\begin{proof}

Assume, on the contrary, that the action is not Hamiltonian. By Theorem \ref{t28}, $\chi^0(M)=\chi^n(M)=0$ and there are no fixed points of index $0$ and $2n$. Moreover,
\begin{center}
$\D{\chi^0(M) = \sum_{p \in M^{S^{1}}} \frac{1}{\prod_{m=1}^{n} (1-t^{\xi_p^m})}=\sum_{p \in M^{S^{1}}}\frac {\prod_{-\xi_p^m<0} (-t^{-\xi_p^m})}{\prod_m (1-t^{|\xi_p^m|})}}$

$\displaystyle{=\sum_{p \in M^{S^{1}}} (-1)^{\frac{\lambda_{p}}{2}} \frac {\prod_{\xi_p^m<0} t^{-\xi_p^m}}{\prod_m (1-t^{|\xi_p^m|})}=0}$.
\end{center}

Denote by $B_p=A \setminus \{|w_p^1|,\cdots,|w_p^n|\}$ the set of elements in $A$ minus the absolute values of the weights at $p$. We multiply the equation above by $\prod_{i=1}^{l} (1-t^{a_i})$ to get
\begin{center}
$\D{\sum_{p \in M^{S^{1}}} (-1)^{\frac{\lambda_{p}}{2}} \prod_{\xi_p^m<0} t^{-\xi_p^m} \prod_{a \in B_p} (1-t^{a})=0}$.
\end{center}

In the last equation, each term whose exponent is the sum of odd elements has the coefficient $-1$ and each term whose exponent is the sum of even elements has the coefficient $1$. Since $\chi^0(M)=0$, this implies that each term whose exponent is the sum of an odd number of elements must cancel out with another term whoch exponent is the sum of an even number of elements.

Suppose that there is a fixed point $p$ of index $2i$. Then $p$ contributes a summand $(-1)^i t^{-\sum_{\xi_p^m<0} \xi_p^m}$, where $\xi_p^m<0$ are the negative weights at $p$. Since $\chi^0(M)=0$, this term must be cancelled out. The coefficient of the term is $(-1)^i$. Therefore, if the term is cancelled out by another term, then its exponent must be the sum of $j$-elements, where $j$ and $i$ have different parities. By the assumption that $A_i \cap A_j = \emptyset$ for $j \neq i \mod 2$, the summand $(-1)^i t^{-\sum_{\xi_p^m<0} \xi_p^m}$ cannot be cancelled out, which is a contradiction. Therefore, there are no fixed points of index $2i$.


From now on we seperate into several cases, depending on if $i \leq \frac{n}{2}$ or $i > \frac{n}{2}$, if $i$ is odd or even, and if $n$ is odd or even. Each case is a slight variation of the other cases. If $i > \frac{n}{2}$, we use the symmetry that $N^j=N^{n-j}$ for all $j$, where $N_j$ is the number of fixed points of index $2j$. With the symmetry, the case where $i > \frac{n}{2}$ is a slight variation of the case where $i \leq \frac{n}{2}$.




First, suppose that $i \leq \frac{n}{2}$ and $i$ is odd. By Theorem \ref{t28},
\begin{center}
$\D{\chi^i(M) = \sum_{p \in M^{S^{1}}} \frac{\sigma_i (t^{\xi_p^1}, \cdots, t^{\xi_p^n})}{\prod_m (1-t^{\xi_p^m})}}$

$\D{=\sum_{p \in M^{S^{1}}} (-1)^{\frac{\lambda_{p}}{2}} \frac {[\prod_{\xi_p^m<0} t^{-\xi_p^m}] \sigma_i (t^{\xi_p^1}, \cdots, t^{\xi_p^n})}{\prod_m (1-t^{|\xi_p^m|})}=0}$.
\end{center}

We multiply the equation above by $\prod_{i=1}^l (1-t^{a_i})$ to get

\begin{center}
$\D{\sum_{p \in M^{S^{1}}} (-1)^{\frac{\lambda_{p}}{2}} \sigma_i (t^{\xi_p^1}, \cdots, t^{\xi_p^n}) \prod_{\xi_p^m<0} t^{-\xi_p^l}  \prod_{a \in B_p} (1-t^{a})=0}$.
\end{center}

Suppose that a fixed point $p$ has the index $2k$, where $k$ is even and $0 \leq k \leq 2i$. In the last equation, such a point contributes a summand whose exponent is the sum of $i$ elements. By using the argument as above, such a term cannot be cancelled out. Hence there are no fixed points of index $0,4,\cdots,4i$, i.e. $N^0=N^4=\cdots=N^{4i}=0$ and thus $N^{2n}=N^{2n-4}=\cdots=N^{2n-4i}=0$, by Theorem \ref{t28}. In particular, $\chi^{i+1}(M)=(-1)^{i+1}N^{i+1}=0$.

Next, we consider $\chi^{i+1}(M)=0$. Using the same argument, one can show that there are no fixed points of index $2k$ where $k$ is odd and $0 \leq k \leq i+2$. And then we consider $\chi^{i+2}(M)=0$ to conclude that there are no fixed points of index $2k$ where $k$ is even and $0 \leq k \leq i+3$. We continue this to conclude that there are no fixed points of any index, which is a contradiction.

Second, suppose that $i \leq \frac{n}{2}$ and $i$ is even. Using the same argument as in the first case, by considering $\chi^i(M)=0$, one can show that there are no fixed points of index $2k$, where $k$ is even and $0 \leq k \leq 2i$. Next, consider $\chi^{i+2}(M)=0$ and conclude that there are no fixed points of index $2k$ where $k$ even and $k \leq i+4$. And then we consider $\chi^{i+4}(M)=0$ to conclude that there are no fixed points of index $2k$ where $k$ is even and $k \leq i+6$. We continue this until $\chi^{n-i}(M)$ if $n$ is even and $\chi^{n-i-1}(M)$ if $n$ is odd, to conclude that there are no fixed points of index that is a multiple of 4, which contradicts Lemma \ref{c19} below that there must be fixed points whose indices differ by 2.

Third, suppose that $n$ is odd, $i>\frac{n}{2}$, and $i$ is odd. Considering $\chi^i(M)=0$, it follows that there are no fixed points of index $2k$ such that $k$ is even and $0 \leq k \leq 2(n-i)$. By Theorem \ref{t28}, since $N^j=N^{n-j}$ for all $j$, there are no fixed points of index $2k$, where $k$ is odd and $n-(2n-2i)=2i-n \leq k \leq n$. In particular, there are no fixed points of index $i-2$. Next, considering $\chi^{i-2}(M)=0$, we have that there are no fixed points of index $2k$, where $k$ is even and $k \leq 2n-2i+2$. By the symmetry that $N^j=N^{n-j}$ for all $j$, there are no fixed points of index $2k$ such that $k$ is odd and $2i-n-2 \leq k \leq n$. We continue this to have that there are no fixed points of any index, which is a contradiction.

As a slight variation of the arguments above, the other cases, (4) $n$ is odd, $i>\frac{n}{2}$, and $i$ is even, (5) $n$ is even, $i>\frac{n}{2}$, and $i$ is odd, and (6) $n$ is even, $i>\frac{n}{2}$, and $i$ is even, are proved. \end{proof}






\begin{cor} \label{c13} \cite{TW}, \cite{L}
A semi-free, symplectic circle action on a closed symplectic manifold $M$ with isolated fixed points is Hamiltonian if and only if it has a fixed point.
\end{cor}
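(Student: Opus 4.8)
The plan is to prove the two implications separately. The ``only if'' direction is the classical fact that a Hamiltonian action on a compact manifold has a fixed point, while the ``if'' direction will drop out of Theorem \ref{t42} as soon as we identify the weights of a semi-free action. So I would begin by recording that a semi-free circle action with isolated fixed points has all of its weights equal to $\pm 1$: if some weight $\xi_p^i$ at a fixed point $p$ satisfied $|\xi_p^i|=a\geq 2$, then by the linear model for the action near $p$ the subgroup $\mathbb{Z}/a\subset S^1$ would fix the $i$-th complex coordinate axis of $T_pM$, producing non-fixed points with non-trivial stabilizer and contradicting semi-freeness. Hence every weight lies in $\{+1,-1\}$.

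Granting this, the ``if'' direction is immediate. Suppose the action has an (isolated) fixed point. Since every weight is $\pm 1$, the sum of any odd number of weights among all fixed points is an odd integer and therefore never vanishes. Theorem \ref{t42} then applies verbatim and shows that the action is Hamiltonian. (If one prefers to invoke Theorem \ref{t12} directly, note that here $A$ is a multiset all of whose entries equal $1$, so each $A_i$ is a multiset whose only value is $i$; then $A_i\cap A_j=\emptyset$ whenever $i\not\equiv j \bmod 2$, and any index $0<i<n$ verifies the hypothesis, provided $n\geq 2$.)

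For the ``only if'' direction, assume the action is Hamiltonian with moment map $\mu:M\to\mathbb{R}$, so that $d\mu=\iota_{X_M}\omega$. Since $M$ is compact, $\mu$ attains a maximum at some point $q$, where $d\mu_q=0$; the non-degeneracy of $\omega$ then forces $(X_M)_q=0$, so $q$ is fixed and the fixed-point set is non-empty. This completes the equivalence. I do not expect a genuine obstacle here: all of the real work has been front-loaded into Theorem \ref{t42} (and ultimately Theorem \ref{t12}), so that the only substantive point left for the corollary is the weight computation for semi-free actions, which is routine.
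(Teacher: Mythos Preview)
Your proof is correct and follows essentially the same route as the paper: observe that semi-freeness forces every weight to be $\pm 1$, and then apply the main result (the paper invokes Theorem~\ref{t12} via $A_i=\{i\}$, which is precisely your parenthetical alternative, while your primary phrasing via Theorem~\ref{t42} is equivalent). You simply supply more detail, including the standard ``only if'' direction and the justification of the weight computation, which the paper leaves implicit.
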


\begin{proof}
All the weights are either $1$ or $-1$. Therefore, $A_i=\{i\}$ for each $i$ and so the corollary follows. \end{proof}

We show that, in certain cases, we can only look at the sum of three weights. The first instance is the following (Theorem \ref{t44}):


\begin{theo} \label{t14}
Consider a symplectic circle action on a $2n$-dimensional closed symplectic manifold $M$ with non-empty fixed points, whose weights are $\{\pm a_1, \pm a_2, \cdots, \pm a_n\}$ for some positive integers $a_i$, where $1 \leq i \leq n$. Assume that $n \leq 5$ and $\pm a_i \pm a_j \pm a_k \neq 0$ for all $i<j<k$. Then the action is Hamiltonian. 
\end{theo}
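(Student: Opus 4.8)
The plan is to deduce the theorem from Theorem \ref{t12}, together with the four-dimensional case of \cite{MD}, by exhibiting in each dimension a single index $0<i<n$ for which the hypothesis of Theorem \ref{t12} holds. The key first observation is that, since every fixed point carries the same multiset of weight absolute values $\{a_1,\dots,a_n\}$, the collection $A$ of Theorem \ref{t12} is exactly this multiset, so $l=n$ and $A_i$ is the collection of sums of $i$-element subsets of $\{a_1,\dots,a_n\}$. Writing $T=a_1+\cdots+a_n$, complementation of subsets gives the symmetry $A_j=\{\,T-s : s\in A_{n-j}\,\}$, so the hypothesis of Theorem \ref{t12} holds for $i$ exactly when it holds for $n-i$; this halves the bookkeeping. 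Finally, I would record that the standing assumption $\pm a_i\pm a_j\pm a_k\neq 0$ for all $i<j<k$ is equivalent to the statement that no $a_i$ equals the sum of two of the others, that is, $A_1\cap A_2=\emptyset$.

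With these preliminaries the cases $n\le 2$ are immediate: there the hypothesis is vacuous and $\dim M\le 4$, so the action is Hamiltonian by \cite{MD}. For $n=3$ I would take $i=1$; the only opposite-parity collision to exclude is $A_1\cap A_2=\emptyset$, which is precisely the hypothesis. For $n=4$ I would take $i=2$; the opposite-parity conditions are $A_2\cap A_1=\emptyset$ (the hypothesis) and $A_2\cap A_3=\emptyset$. For the latter, a $2$-subset and a $3$-subset of $\{1,2,3,4\}$ must overlap, and any overlap reduces an equality ``pair-sum $=$ triple-sum'' to an equation $a_j=a_k+a_l$ (or to $a_m=0$), which the hypothesis forbids; hence $A_2\cap A_3=\emptyset$ and Theorem \ref{t12} applies.

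The only genuinely new work is $n=5$, which I would settle by a dichotomy on the two candidate pivots $i=2$ and $i=1$. For $i=2$ one needs $A_2$ disjoint from $A_1,A_3,A_5$: disjointness from $A_1$ is the hypothesis, disjointness from $A_5=\{T\}$ is automatic since every pair-sum is strictly less than $T$, and disjointness from $A_3$ can fail only through a collision between a $2$-subset and a disjoint $3$-subset, that is, $2(a_i+a_j)=T$. For $i=1$ one needs $A_1$ disjoint from $A_2,A_4$: disjointness from $A_2$ is the hypothesis, and, writing the elements of $A_4$ as $T-a_j$, an equality $a_m=T-a_j$ with $m\neq j$ would force the remaining three positive weights to sum to $0$, so disjointness from $A_4$ can fail only through $2a_m=T$. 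Now if no pair sums to $T/2$, the pivot $i=2$ works; otherwise some pair $\{a_i,a_j\}$ has $a_i+a_j=T/2$, and then no single weight can also equal $T/2$, for that would give $a_m=a_i+a_j$ in violation of the hypothesis, so the pivot $i=1$ works. In either case Theorem \ref{t12} yields that the action is Hamiltonian.

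I expect the main obstacle to be exactly the bookkeeping of the previous paragraph: one must pin down precisely which parity-changing subset-sum coincidences survive the three-element hypothesis—for $n=5$ these are only $2(a_i+a_j)=T$ and $2a_m=T$—and then show, via the forced relation $a_i+a_j=a_m$, that the hypothesis prevents these two obstructions from occurring simultaneously, so that at least one admissible pivot is always available. I do not expect the symmetry $N^j=N^{n-j}$ or the higher identities $\chi^i=(-1)^iN^i$ to be needed beyond what is already packaged inside Theorem \ref{t12}; the whole content of the present statement is the elementary combinatorial fact that, for $n\le 5$, the three-element condition forces the hypothesis of Theorem \ref{t12} for some $0<i<n$.
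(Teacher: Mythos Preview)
Your argument is correct, and for $n\le 4$ it coincides with the paper's: both reduce directly to Theorem~\ref{t12} after the observation that $A_1\cap A_2=\emptyset$ forces $A_2\cap A_3=\emptyset$ when $n=4$. For $n=5$ your route is genuinely shorter. The paper also records that, under $A_1\cap A_2=\emptyset$, the two obstructions $A_1\cap A_4\neq\emptyset$ (equivalently $2a_m=T$) and $A_2\cap A_3\neq\emptyset$ (equivalently $2(a_i+a_j)=T$) are mutually exclusive, but it does not draw your conclusion that therefore at least one of the pivots $i=1$ or $i=2$ satisfies the hypothesis of Theorem~\ref{t12}. Instead the paper treats each obstructed case by a direct computation: it expands $\chi^0(M)=0$ (and, in the second case, $\chi^1(M)=0$) after clearing denominators, pins down which index patterns can survive, and derives a contradiction via Corollary~\ref{c19}. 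Your dichotomy stays entirely within the subset-sum combinatorics and invokes Theorem~\ref{t12} once, avoiding those generating-function manipulations and the appeal to Corollary~\ref{c19}; the paper's approach, while longer, has the side benefit of exhibiting explicitly which weight configurations are forced once one of the relations $2a_m=T$ or $2(a_i+a_j)=T$ holds.
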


\begin{proof}
Assume, on the contrary, that the action in not Hamiltonian. By Theorem \ref{t28}, $\chi^0(M)=\chi^n(M)=0$ and there are no fixed points of index $0$ and $2n$. Denote by $A=\{a_1, a_2, \cdots , a_n\}$ and $A_i=\{a_{j_1}+a_{j_2}+\cdots+a_{j_i}\}_{a_{j_1}<a_{j_2}<\cdots<a_{j_i}}$ the collection of sums of $i$ elements of $A$, where $1 \leq i \leq n$. Then the problem is equivalent to showing that if $A_1 \cap A_2 =\emptyset$, the action is Hamiltonian. We consider $A_i \cap A_j$ for all $i,j$ such that $i \neq j \mod 2$, $1 \leq i \leq n-1$, $1 \leq j \leq n$.


First, assume that $n \leq 3$. Then $A_1 \cap A_2$ is the only intersection that we consider, so the result follows from Theorem \ref{t12}.

Second, assume that $n=4$. Then $A_1 \cap A_2$ and $A_2 \cap A_3$ are the only ones that we consider. However, $A_1 \cap A_2=\emptyset$ if and only if $A_2 \cap A_3 = \emptyset$. Therefore the result follows from Theorem \ref{t12}.

Finally, assume that $n=5$. The only possible non-empty intersections that we consider are $A_1 \cap A_4 \neq \emptyset$, $A_2 \cap A_3 \neq \emptyset$, and $A_3 \cap A_4 \neq \emptyset$. However, $A_2 \cap A_3 \neq \emptyset$ if and only if $A_3 \cap A_4 \neq \emptyset$. Therefore, we can only consider the case $A_1 \cap A_4 \neq \emptyset$ and the case $A_2 \cap A_3 \neq \emptyset$. By the assumption that $A_1 \cap A_2 = \emptyset$, if one is satisfied, the other fails to be satisfied. 

We consider $\chi^0(M)$. By Theorem \ref{t28},
\begin{center}
$\D{\chi^0(M) = \sum_{p \in M^{S^{1}}} \frac{1}{\prod_j (1-t^{\xi_{p}^{j}})}=\sum_{p \in M^{S^{1}}} (-1)^{\frac{\lambda_{p}}{2}} \frac{\prod_{\xi_{p}^{j}<0} t^{-\xi_{p}^{j}}           }{\prod_j (1-t^{\xi_{p}^{j}})}=0}.$
\end{center}
Multiplying by $(1-t^{a_1}) \cdots (1-t^{a_5})$ on the equation above, we have
\begin{center}
$\D{0=\sum_{p \in M^{S^{1}}} (-1)^{\frac{\lambda_{p}}{2}} \prod_{\xi_{p}^{j}<0} t^{-\xi_{p}^{j}} }.$ 
\end{center}

Assume first that $A_1 \cap A_4 \neq \emptyset$. Without loss of generality, let $a_5=a_1+a_2+a_3+a_4$. Suppose that there is a fixed point whose weights are $\{-a_1,a_2,a_3,a_4,a_5\}$. In the last expression, such a point has a summand $-t^{a_1}$. This term can only be cancelled by another term whose exponent is the sum of even elements. However, $a_5=a_1+a_2+a_3+a_4$ is the only equation among weights. Therefore, there cannot be a fixed point whose weights are $\{-a_1,a_2,a_3,a_4,a_5\}$. Similarly, one can show that we may only have fixed points whose weights are $\{a_1,a_2,a_3,a_4,-a_5\}$ and $\{-a_1,-a_2,-a_3,-a_4,a_5\}$. Therefore, we only have fixed points of index 2 and 8, which contradicts Corollary \ref{c19} below.

Next, suppose that $A_2 \cap A_3 \neq \emptyset$. Without loss of generality, let $a_1+a_2+a_3=a_4+a_5$. By using an argument similar to the case above, one can show that we may only have fixed points whose weights are $\{a_1,a_2,a_3,-a_4,-a_5\}$ and $\{-a_1,-a_2,-a_3,a_4,a_5\}$. Suppose that there are $k$ fixed points whose weights are $\{a_1,a_2,a_3,-a_4,-a_5\}$. By Theorem \ref{t28}, $k=\chi^2(M)=-\chi^3(M)$ and there are $k$ fixed points of weights $\{-a_1,-a_2,-a_3,a_4,a_5\}$. Moreover,
\begin{center}
$\D{0=\chi^1(M)= k \frac{t^{a_1}+t^{a_2}+t^{a_3}+t^{-a_4}+t^{-a_5}}{(1-t^{a_1})(1-t^{a_2})(1-t^{a_3})(1-t^{-a_4})(1-t^{-a_5})}}$

$\D{+ k \frac{t^{-a_1}+t^{-a_2}+t^{-a_3}+t^{a_4}+t^{a_5}}{(1-t^{-a_1})(1-t^{-a_2})(1-t^{-a_3})(1-t^{a_4})(1-t^{a_5})}}$

$\D{=k \frac{t^{a_4+a_5}(t^{a_1}+t^{a_2}+t^{a_3}+t^{-a_4}+t^{-a_5})}{(1-t^{a_1})(1-t^{a_2})(1-t^{a_3})(1-t^{a_4})(1-t^{a_5})}}$

$\D{-k \frac{t^{a_1+a_2+a_3}(t^{-a_1}+t^{-a_2}+t^{-a_3}+t^{a_4}+t^{a_5})}{(1-t^{a_1})(1-t^{a_2})(1-t^{a_3})(1-t^{a_4})(1-t^{a_5})}}$.
\end{center}
Multiplying the equation above by $(1-t^{a_1})(1-t^{a_2})\cdots(1-t^{a_5})$, we have
\begin{center}
$\D{0=k t^{a_4+a_5}(t^{a_1}+t^{a_2}+t^{a_3}+t^{-a_4}+t^{-a_5})}$ 
$\D{-k t^{a_1+a_2+a_3}(t^{-a_1}+t^{-a_2}+t^{-a_3}+t^{a_4}+t^{a_5})}$
$\D{=k(t^{a_1+a_4+a_5}+t^{a_2+a_4+a_5}+t^{a_3+a_4+a_5}+t^{a_5}+t^{a_4})}$
$\D{-k(t^{a_2+a_3}+t^{a_1+a_3}+t^{a_1+a_2}+t^{a_1+a_2+a_3+a_4}+t^{a_1+a_2+a_3+a_5})}$.
\end{center}
By the assumption it follows that the term $kt^{a_4}$ cannot be cancalled out, which is a contradiction. \end{proof}

Another case where the condition that the sum of any three weights is never equal to zero guarantees that a symplectic action is Hamiltonian, is when the dimension of the manifold is six (Theorem \ref{t43}). In fact, we prove a stronger result:

\begin{theo} \label{t15}
Consider a symplectic circle action on a six-dimensional closed symplectic manifold with non-empty isolated fixed points. Suppose that each negative weight at the fixed point of index 2 is never equal to the sum of negative weights at the fixed point of index 4. Then the action is Hamiltonian.

\end{theo}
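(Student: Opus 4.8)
The plan is to argue by contradiction, using the generating-function identity of Theorem \ref{t28} together with the sign/parity bookkeeping from the proof of Theorem \ref{t12}, but this time extracting only the \emph{lowest-degree} term of the resulting power series. First I would suppose the action is not Hamiltonian. By Theorem \ref{t28}, $\chi^0(M)=\chi^3(M)=0$; since $\chi^0(M)=N^0$ and $\chi^0(M)=(-1)^3N^3=-N^3$, there are no fixed points of index $0$ or $6$. As the fixed point set is non-empty and $n=3$, every fixed point has index $2$ (one negative weight) or index $4$ (two negative weights).

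Next I would rewrite the identity $\chi^0(M)=0$ exactly as in the proof of Theorem \ref{t12}, namely
\[
\chi^0(M)=\sum_{p\in M^{S^1}}(-1)^{\lambda_p/2}\,\frac{t^{n_p}}{\prod_m\bigl(1-t^{|\xi_p^m|}\bigr)}=0,
\qquad n_p:=\sum_{\xi_p^m<0}(-\xi_p^m),
\]
where $n_p$ is the sum of the absolute values of the negative weights at $p$. Expanding each factor $\tfrac{1}{1-t^{|\xi_p^m|}}=1+t^{|\xi_p^m|}+\cdots$ as a formal power series, the contribution of $p$ equals $(-1)^{\lambda_p/2}\bigl(t^{n_p}+(\text{higher-order terms})\bigr)$, with $(-1)^{\lambda_p/2}=-1$ when $\lambda_p=2$ and $+1$ when $\lambda_p=4$. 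Here $n_p$ is the absolute value of the single negative weight when $\lambda_p=2$, and the absolute value of the sum of the two negative weights when $\lambda_p=4$; thus the hypothesis says exactly that $\{n_p:\lambda_p=2\}$ and $\{n_p:\lambda_p=4\}$ are disjoint.

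The key step is then to read off the lowest-degree coefficient. Let $N=\min_p n_p$, attained since the fixed point set is finite and non-empty. Every point with $n_p>N$ contributes only in degrees $>N$, so the coefficient of $t^{N}$ equals $\sum_{n_p=N}(-1)^{\lambda_p/2}$. By the disjointness above, all points attaining the minimum $N$ share the same index, whence this coefficient is either $-\#\{p:\lambda_p=2,\ n_p=N\}$ or $+\#\{p:\lambda_p=4,\ n_p=N\}$, and in either case it is nonzero. This contradicts $\chi^0(M)=0$, finishing the proof.

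I expect the only point needing care to be the verification that the coefficient of $t^{N}$ receives contributions solely from the leading terms of the minimal points; this is immediate once one notes that each point's series begins in degree $n_p\ge N$, so no higher-order term of any point reaches degree $N$. Everything else reduces to the sign analysis already established for Theorem \ref{t12}, and (as a consistency check) one recovers Theorem \ref{t43} since the condition that no sum of three weights vanishes forces the disjointness hypothesis used here.
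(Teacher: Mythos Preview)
Your proof is correct and follows the same overall contradiction strategy as the paper (assume non-Hamiltonian, use $\chi^0(M)=0$, and extract a nonvanishing low-degree coefficient), but your execution is cleaner. The paper clears denominators by multiplying through by $\prod_{a\in A}(1-t^a)$ and then runs a two-step argument: first it considers the coefficient of $t^{b_1}$ (with $b_1$ the smallest absolute negative weight among index-$2$ points) to deduce $e_1+f_1<b_1$, and then it considers the coefficient of $t^{e_1+f_1}$ to reach a contradiction. You instead expand each summand as a formal power series in $t$ and look once at the global minimum $N=\min_p n_p$; the disjointness of $\{n_p:\lambda_p=2\}$ and $\{n_p:\lambda_p=4\}$ immediately forces the coefficient of $t^N$ to have a fixed sign. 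This bypasses the back-and-forth and the bookkeeping with the sets $B_p$, $C_p$; the paper's route has the mild advantage of staying in the polynomial ring, but nothing in your power-series argument is delicate since all $|\xi_p^m|>0$ and every summand genuinely begins in degree $n_p$.
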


\begin{proof}
Assume, on the contrary, that the action is not Hamiltonian. By Theorem \ref{t28}, $\chi^0(M)=\chi^3(M)=0$ and there are no fixed points of index $0$ and $6$. Moreover, the number of fixed points of index 2 and that of 4 are equal. Suppose that there are $k$ fixed points of index 2, and let $p_i,q_i$ be the fixed points of index 2,4, respectively, for $1 \leq i \leq k$. Let $\Sigma_{p_i}=\{-b_i,c_i,d_i\},\Sigma_{q_i}=\{-e_i,-f_i,g_i\}$ be the weights at $p_i,q_i$, respectively, where $b_i,c_i,d_i,e_i,f_i,g_i$ are positive integers. By permuting $p_i$'s and $q_i$'s if neccesary, we may assume that $b_1 \leq b_2 \leq \cdots \leq b_k$ and $e_1+f_1 \leq e_2+f_2 \leq \cdots \leq e_k+f_k$. By Theorem \ref{t28},
\begin{center}
$\D{\chi^0(M) = \sum_i \frac{1}{(1-t^{-b_i})(1-t^{c_i})(1-t^{d_i})} + \sum_i \frac{1}{(1-t^{-e_i})(1-t^{-f_i})(1-t^{g_i})}}$

$\D{=  - \sum_i \frac{t^{b_i}}{(1-t^{b_i})(1-t^{c_i})(1-t^{d_i})} + \sum_i \frac{t^{e_i+f_i}}{(1-t^{e_i})(1-t^{f_i})(1-t^{g_i})}=0.}$
\end{center}

Denote by $A=\{a_1,a_2, \cdots, a_l\}$ the collection of all the absolute values of the weights over all the fixed points counted with multiplicity. Let $B_i=A \setminus \{b_i,c_i,d_i\}$,$C_i=A \setminus \{e_i,f_i,g_i\}$ be the elements in $A$ minus the absolute values of weights at $p_i$,$q_i$, respectively.

Multiplying the equation above by $\prod_{a \in A} (1-t^{a})$, we have
\begin{center}
$0= - \sum_i t^{b_i} \prod_{a \in B_i} (1-t^{a}) + \sum_i t^{e_i+f_i} \prod_{a \in C_i} (1-t^{a}).$
\end{center}

In this equation, a term has the coefficient -1 if its exponent is the sum of odd elements and 1 if its exponent is the sum of even elements. Consider $-t^{b_1}$. Since $b_1 \leq b_i$ for $i \geq 2$, this term cannot be cancelled out by any summand in $-\sum_i t^{b_i} \prod_{a \in B_i} (1-t^{a})$. Therefore, it must be cancelled out by another summand in $\sum_i t^{e_i+f_i} \prod_{a \in C_i} (1-t^{a})$ for some $i$, whose exponent is the sum of even elements, where at least two elements of them are $e_i,f_i$. By the assumption, the exponent of such a summand cannot be $e_i+f_i$. Hence, the exponent of the term must be the sum of at least four elements, say $b_1=e_j+f_j+ \cdots$. Next, consider $t^{e_1+f_1}$. We have that $e_1+f_1 < e_j+f_j+\cdots \leq b_1$. Since $e_1+f_1 \leq e_2+f_2 \leq \cdots \leq e_k+f_k$, the term $t^{t^{e_1+f_1}}$ cannot be cancelled out by any term in $\sum_i t^{e_i+f_i} \prod_{a \in C_i} (1-t^{a})$. On the other hand, $e_1+f_1 < b_1$. Therefore, it cannot also be cancelled out by any term in $-\sum_i t^{b_i} \prod_{a \in B_i} (1-t^{a})$, which is a contradiction. \end{proof}


As a corollary, we recover the result by L. Godinho:

\begin{cor} \label{c16} \cite{G} Let the circle act symplectically on a six-dimensional closed symplectic manifold. Suppose that fixed points are isolated and their weights are $\{ \pm a, \pm b, \pm c\}$, where $0<a \leq b \leq c$ and $a+b \neq c$. If there is a fixed point, then the action is Hamiltonian. \end{cor}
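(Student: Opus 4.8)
The plan is to derive this directly from Theorem \ref{t15}, which requires only that no negative weight at an index-$2$ fixed point equal the sum of the negative weights at an index-$4$ fixed point. So I would first translate the hypothesis on the weights into the language of that theorem. Since $\dim M = 6$, each fixed point has exactly three weights, and the assumption that the weights are $\{\pm a, \pm b, \pm c\}$ means that the absolute values of the three weights at every fixed point are precisely $a, b, c$. A fixed point of index $2$ has exactly one negative weight, so its negative weight is one of $a, b, c$; a fixed point of index $4$ has exactly two negative weights, so the sum of its negative weights is a sum of two elements of $\{a,b,c\}$, namely one of $a+b$, $a+c$, $b+c$ (with repetitions allowed if some of $a,b,c$ coincide).

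Next I would check that, under $0 < a \le b \le c$ and $a + b \ne c$, no value of the first type can equal a value of the second type. Every sum of two of the positive numbers $a, b, c$ is at least $a + b$, while $a + c$ and $b + c$ both exceed $c$ and hence exceed each single value $a, b, c$. Consequently a single value $v \in \{a,b,c\}$ can coincide with a two-element sum only if $v$ is the maximal value $c$ and the sum is the minimal one $a + b$; but this is excluded precisely by the hypothesis $a + b \ne c$. If two of $a, b, c$ happen to be equal, the only additional two-sums that occur are $2a$ when $a = b$ or $2b$ when $b = c$: the former equals $a + b$ and is again excluded, while the latter exceeds $c$, so no new coincidence arises.

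Having verified the hypothesis of Theorem \ref{t15}, I would conclude that the action is Hamiltonian, which is exactly the assertion of the corollary. The argument is essentially a bookkeeping reduction, so I do not expect a genuine obstacle; the only point requiring care is confirming that the sums of negative weights at index-$4$ points really range over no more than $\{a+b, a+c, b+c\}$ (together with the degenerate sums above), so that the single inequality $a + b \ne c$ suffices to rule out every possible coincidence between the two types of quantities appearing in Theorem \ref{t15}.
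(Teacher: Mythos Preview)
Your proposal is correct and follows one of the routes the paper itself indicates: the paper's proof simply states that the corollary follows from Theorem~\ref{t12}, Theorem~\ref{t14}, or Theorem~\ref{t15}, without spelling out the verification. Your reduction to Theorem~\ref{t15} and the check that no element of $\{a,b,c\}$ can equal any of $a+b$, $a+c$, $b+c$ under the hypothesis $a+b\neq c$ is exactly what is needed; the digression about repeated values is harmless but unnecessary, since the two negative weights at an index-$4$ point are always two of the three labelled weights, so their sum is literally one of $a+b$, $a+c$, $b+c$.
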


\begin{proof} This follows from Theorem \ref{t12}, Theorem \ref{t14}, or Theorem \ref{t15}. \end{proof}

For a certain type of weights, we show that there is a restriction. To show the restriction, we introduce a terminology.

\begin{Definition}
Consider a circle action on a closed almost complex manifold. Suppose that the action preserves the almost complex structures and the fixed points are isolated. Denote by $A=\{a_1, a_2, \cdots , a_l\}$ the collection of all the absolute values of weights among all the fixed points counted with multiplicity, and $A_i=\{a_{j_1}+a_{j_2}+\cdots+a_{j_i}\}_{a_{j_1}<a_{j_2}<\cdots<a_{j_i}}$ the collection of sums of $i$ elements of $A$, for $1 \leq i \leq l$. A positive weight $w$ is called \textbf{primitive}, if $w \notin A_i$ for $i \geq 2$, i.e. $w$ is never equal to the sum of the absolute values of weights among all the fixed points, counted with multiplicity, other than $w$ itself.
\end{Definition}

Note that the smallest positive weight is primitive. In \cite{K}, C. Kosniowski derives a certain formula for a holomorphic vector field on a complex manifold with only simple isolated zeros. We follow the idea of C. Kosniowski to find a restriction for a primitive weight of a circle action on a closed almost complex manifold with isolated fixed points. For the smallest positive weight, the Lemma is already given in \cite{JT} and the proof is almost identical, but we give a proof in details.

\begin{lem} \label{l17}
Consider a circle action on a $2n$-dimensional closed almost complex manifold. Suppose that the action preserves the almost complex structure and the fixed points are isolated. For each primitive weight $w$, the number of times the weight $-w$ occurs at fixed points of index $2i$ is equal to the number of times the weight $w$ occurs at fixed points of index $2i-2$, for all $i$.
\end{lem}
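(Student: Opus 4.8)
The plan is to follow Kosniowski's idea by packaging the numbers $\chi^i(M)$ into the equivariant $\chi_y$-genus and then extracting the coefficient of $t^w$. Concretely, I would start from the localization formula underlying Theorem \ref{t28}, which for a closed almost complex manifold with isolated fixed points gives that the rational function
\[
\Phi(t,y):=\sum_{p\in M^{S^1}}\ \prod_{j=1}^n \frac{1+y\,t^{\xi_p^j}}{1-t^{\xi_p^j}}
=\sum_{i=0}^n \chi^i(M)\,y^i
\]
is independent of $t$, since its $y^i$-coefficient is the topological invariant $\chi^i(M)=(-1)^iN^i$. The point is that, as $\Phi$ is constant in $t$, the coefficient of every positive power $t^w$ in its formal power-series expansion about $t=0$ must vanish; reading off the coefficient of $t^w$ for a primitive $w$ will then produce exactly the asserted identity between the occurrences of $-w$ and of $w$ at neighbouring indices.

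First I would expand each fixed-point factor as a formal power series in $t$. Writing a positive weight $\xi=a>0$ as $\tfrac{1+yt^a}{1-t^a}=1+(1+y)t^a+(1+y)t^{2a}+\cdots$ and a negative weight $\xi=-a$ as $\tfrac{y+t^a}{t^a-1}=-y-(1+y)t^a-\cdots$, each fixed point $p$ contributes a power series whose constant term is $(-1)^{\lambda_p/2}y^{\lambda_p/2}$, which already recovers $\chi^i=(-1)^iN^i$. The essential input is primitivity: at any fixed point $p$ the exponent $w$ cannot be written as a sum $\sum_j k_j|\xi_p^j|$ with $\sum_j k_j\ge 2$, so the only contributions to the coefficient of $t^w$ come from a single factor whose weight is $\pm w$, all remaining factors contributing their constant terms.

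Carrying out this bookkeeping, I would show that a weight $+w$ at $p$ contributes $(1+y)(-1)^{\lambda_p/2}y^{\lambda_p/2}$ and a weight $-w$ at $p$ contributes $(1+y)(-1)^{\lambda_p/2}y^{\lambda_p/2-1}$ to the coefficient of $t^w$. Summing over all fixed points, setting the total equal to $0$, dividing by the nonzero polynomial $(1+y)$, and grouping by index $2i$, the coefficient of $y^i$ becomes $(-1)^i\big(P_i^+ - P_{i+1}^-\big)$, where $P_i^+$ (respectively $P_i^-$) counts the occurrences of $+w$ (respectively $-w$) at fixed points of index $2i$. Hence $P_i^+=P_{i+1}^-$ for all $i$, which after reindexing is precisely the statement of the lemma.

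I expect the main obstacle to be the careful use of primitivity together with the multiplicity convention for $A$: one must verify that no higher contribution to the $t^w$-coefficient can occur, for instance $t^w$ arising as $t^{2a}$ from a single factor, or as a sum of two distinct smaller weights, which is exactly the place where the definition of a primitive weight must be invoked in full strength. A secondary point to make precise is that the formal-power-series manipulation is legitimate, i.e.\ that each fixed-point factor has a denominator with nonzero constant term, so that the expansions and their equality with the constant $\Phi$ are valid identities of formal power series in $t$ with coefficients in $\mathbb{Z}[y]$.
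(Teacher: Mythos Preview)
Your proposal is correct and is in the same spirit as the paper's argument, but the packaging is genuinely different and somewhat cleaner. The paper does not introduce the generating function $\Phi(t,y)=\sum_i\chi^i(M)y^i$; instead it fixes one value of $i$ at a time, takes the identity $\chi^i(M)=\sum_p(-1)^{\lambda_p/2}J_p/K_p$ from Theorem~\ref{t28}, clears denominators by multiplying through by the polynomial $\prod_{p}K_p$, and then compares the $t^w$-coefficients on both sides. This yields only the three-term relation
\[
\sum_{\lambda_p=2i}\bigl(N_p(w)+N_p(-w)\bigr)=\sum_{\lambda_p=2i-2}N_p(w)+\sum_{\lambda_p=2i+2}N_p(-w),
\]
and the paper then runs an induction on $i$ starting from $i=0$ (where the identity reads $\sum_{\lambda_p=0}N_p(w)=\sum_{\lambda_p=2}N_p(-w)$) to unwind the recurrence into the two-term equalities $P_{i-1}^{+}=P_i^{-}$. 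By contrast, your use of the extra variable $y$ keeps track of the index automatically: after dividing by $1+y$ you read off $P_i^{+}=P_{i+1}^{-}$ directly from the coefficient of $y^i$, so no induction is needed. Both arguments invoke primitivity at exactly the same point---to guarantee that the only contributions to the $t^w$-coefficient come from a single factor carrying the weight $\pm w$---and the subtlety you flag about $t^{2a}$-type contributions is precisely the one that also has to be handled in the paper's version. What your approach buys is brevity and transparency (one identity in $\mathbb{Z}[y]$ instead of a family of scalar identities plus an induction); what the paper's approach buys is that it stays entirely within polynomial manipulations after clearing denominators, so no formal power-series expansion need be justified.
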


\begin{proof}
We first show that 
\begin{center}
$\D{\sum_{\lambda_{p}=2i} [N_{p}(w)+N_{p}(w)]=\sum_{\lambda_{p}=2i-2} N_{p}(w)+\sum_{\lambda_{p}=2i+2} N_{p}(-w)}, (\ast)$
\end{center}
where $N_p(w)$ is the number of times the weight $w$ occurs at $p$. By Theorem \ref{t28},
\begin{center}
$\D{\chi^i(M) = \sum_{p \in M^{S^1}} \frac{\sigma_i (t^{\xi_p^1}, \cdots, t^{\xi_p^n})}{\prod_{m=1}^{n} (1-t^{\xi_p^m})}}$

$\D{=\sum_{p \in M^{S^1}} (-1)^{\frac{\lambda_{p}}{2}} \frac {[\prod_{\xi_p^m<0} t^{-\xi_p^m}] \sigma_i (t^{\xi_p^1}, \cdots, t^{\xi_p^n})}{\prod_{m=1}^{n} (1-t^{|\xi_p^m|})}}.  (\ast\ast)$ 
\end{center}
Denote by $\D{J_p=[\prod_{\xi_p^m<0} t^{-\xi_p^m}] \sigma_i (t^{\xi_p^1}, \cdots, t^{\xi_p^n})}$ and $\D{K_p=\prod_{m=1}^{n} (1-t^{|\xi_p^m|})}$.

If $\lambda_{p}=2i$, then $J_p=1+f_p(t)$, where $f_p(t)$ is a polynomial that does not have a constant term and $t^w$-term.

If $\lambda_{p}=2i \pm 2$, then $J_p=N_{p}(\mp w)t^w+f_p(t)$, where $f_p(t)$ is a polynomial that does not have a constant term and $t^w$-term.

If $\lambda_{p} \neq 2i, 2i \pm2$, then $J_p=f_p(t)$, where $f_p(t)$ is a polynomial that does not have a constant term and $t^w$-term.

Multiplying $(\ast \ast)$ by $\displaystyle{\prod_{p \in M^{S^1}} K_p}$ yields
\begin{center}
$\chi^i(M)[1- \sum_p(N_p(-w)+N_p(w))t^w]+g_1(t)= \{(-1)^{i-1} \sum_{\lambda_{p}=2i-2} N_{p}(w) + (-1)^{i+1} \sum_{\lambda_{p}=2i+2} N_{p}(-w) +  (-1)^i \sum_{\lambda_{p}=2i} ( N_{p}(w)+N_{p}(-w))- \chi^i(M)\sum_{p} ( N_{p}(w)+N_{p}(-w))\}t^w + \sum_{\lambda_{p}=2i}(-1)^i +g_2(t)$,
\end{center}
where $g_i(t)$ are polynomials without constant terms and $t^w$-terms. Comparing the coefficients of $t^w$-terms, the claim follows.

Applying $(\ast)$ for $i=0$, we have
\begin{center}
$\D{\sum_{\lambda_{p}=0} N_{p}(w)=\sum_{\lambda_{p}=2} N_{p}(-w)}$.
\end{center}

Next, applying $(\ast)$ for $i=1$, we have
\begin{center}
$\D{\sum_{\lambda_{p}=2} [N_{p}(-w)+N_{p}(w)]=\sum_{\lambda_{p}=0} N_{p}(w)+\sum_{\lambda_{p}=4} N_{p}(-w)}$.
\end{center}

Since $\D{\sum_{\lambda_{p}=0} N_{p}(w)=\sum_{\lambda_{p}=2} N_{p}(-w)}$, it follows that 
\begin{center}
$\D{\sum_{\lambda_{p}=2} N_{p}(w)=\sum_{\lambda_{p}=4} N_{p}(-w)}$,
\end{center}

Continuing this, the Lemma follows.
\end{proof}

As an application, there must be at least two fixed points whose indices are nearby. This is shown for a holomorphic vector field on a compact complex manifold with only simple isolated zeroes by C. Kosniowski \cite{K}.

\begin{cor} \label{c19}
Consider a circle action on a closed almost complex manifold. Suppose that the action preserves the almost complex structure and the fixed points are non-empty and isolated. Then there exist two fixed points whose indices differ by 2.
\end{cor}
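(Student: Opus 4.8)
The plan is to apply Lemma \ref{l17} to the smallest positive weight and argue by contradiction. Let $w$ be the smallest element of the collection $A$ of absolute values of all weights. Since every element of $A$ is positive and any sum of two or more elements is at least $2w > w$, the weight $w$ is primitive, so Lemma \ref{l17} applies to it. Moreover, because the fixed point set is non-empty and $w$ is the smallest absolute value occurring among the weights, either $+w$ or $-w$ must occur as a weight at some fixed point.

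Assume, for contradiction, that no two fixed points have indices differing by $2$. For each $i$ I would write $M_i=\sum_{\lambda_p=2i} N_p(w)$ and $L_i=\sum_{\lambda_p=2i} N_p(-w)$ for the number of occurrences of $+w$ and of $-w$ at fixed points of index $2i$. Lemma \ref{l17} says precisely that $L_i=M_{i-1}$ for all $i$. Now suppose some fixed point of index $2i$ has $+w$ among its weights, so that $M_i\geq 1$; then $L_{i+1}=M_i\geq 1$, so some fixed point has index $2i+2$, and these two indices differ by $2$, a contradiction. Symmetrically, if some fixed point of index $2i$ has $-w$ among its weights, then $L_i\geq 1$, hence $M_{i-1}=L_i\geq 1$, producing a fixed point of index $2i-2$, again a contradiction. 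Since one of these two situations must occur, the assumption fails and two fixed points with indices differing by $2$ must exist.

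The argument is short, and I expect the only delicate points to be bookkeeping ones. First, one should confirm that the relevant index stays in range: a fixed point of index $2i$ carrying $+w$ necessarily has $i<n$, since at index $2n$ all weights are negative, so $2i+2\leq 2n$ is a genuine index; likewise a fixed point carrying $-w$ has $i>0$. Thus the conclusions $L_{i+1}\geq 1$ and $M_{i-1}\geq 1$ really do force an honest fixed point rather than an empty sum. Second, one must verify that the lemma is legitimately applicable, i.e. that $w$ is primitive, which is immediate from minimality as noted above. No estimate or case analysis beyond these observations seems to be needed; the main conceptual content is recognizing that the \emph{smallest} weight is the right one to feed into Lemma \ref{l17}, so that its appearance anywhere is automatically matched across neighboring indices.
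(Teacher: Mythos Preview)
Your proposal is correct and follows exactly the approach indicated in the paper: apply Lemma \ref{l17} to the smallest positive weight. Your write-up simply fills in the details that the paper leaves implicit, including the verification that the smallest weight is primitive and the bookkeeping that the resulting neighboring index lies in the allowed range.
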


\begin{proof}
Apply Lemma \ref{l17} to the smallest positive weight.
\end{proof}


\end{document}